\newcommand{\bC}{\mathbb{C}}
\newcommand{\bR}{\mathbb{R}}
\newcommand{\bZ}{\mathbb{Z}}
\newcommand\lra{\longrightarrow}
\newcommand\Diff{\mathrm{Diff}}
\newcommand{\hcoker}{/\!\!/}
\newcommand{\R}{\bR}
\renewcommand{\epsilon}{\varepsilon}
\mathchardef\ordinarycolon\mathcode`\:
\theoremstyle{plain}
\newtheorem{theorem}{Theorem}[section]
\newtheorem{proposition}[theorem]{Proposition}
\newtheorem{lemma}[theorem]{Lemma}
\newtheorem{corollary}[theorem]{Corollary}
\newtheorem{question}[theorem]{Question}
\theoremstyle{definition}
\theoremstyle{remark}
\newtheorem*{remark*}{Remark}
\numberwithin{equation}{section}
\title{On the order of Dehn twists}
\author{Ailsa Keating}
\email{amk50@cam.ac.uk}
\author{Oscar Randal-Williams}
\thanks{ORW was  partially  supported  by  the  ERC  under the European Union’s Horizon 2020 research and innovation programme (grant agreement No. 756444), and by a Philip Leverhulme Prize from the Leverhulme Trust.}
\email{o.randal-williams@dpmms.cam.ac.uk}
\address{Centre for Mathematical Sciences\\
Wilberforce Road\\
Cambridge CB3 0WB\\
UK}
\date{\today}
\begin{document}
\begin{abstract}
This note records the order of a higher dimensional Dehn twist in a range of topologically significant groups. 
\end{abstract}
\maketitle

\section{Introduction}

Consider $$T^*S^n = \{ (u,v) \in \bR^{n+1} \times \bR^{n+1} \, | \,  |u|=1, \langle u,v \rangle =0 \}$$
equipped with its standard exact symplectic form.
Recall that the \emph{Dehn twist} in the zero section is defined as follows  \cite{Arnold}.   Let $\mu(u,v) = ||v||$. (After identifying $T^\ast S^n$ with $TS^n$ via the standard metric, this is the Hamiltonian generating the normalised geodesic flow on $(T S^n )\backslash S^n$.) Let $\rho \in C^\infty (\bR)$ be a function which vanishes for sufficiently large $t$ and such that 
$\rho(-t) = \rho(t)-t$ for small $|t|$, and let $( \phi_t)$ be the Hamiltonian flow of $\rho(\mu)$. Then $\phi_{2\pi}$ extends smoothly over the zero section to a compactly-supported symplectomorphism of $T^\ast S^n$. This is the Dehn twist $\tau$. It represents an element $[\tau] \in \pi_0(\mathrm{Symp}_c(T^* S^n))$ in the compactly-supported symplectic mapping class group which is independent of the choice of $\rho$. 

 Under the neglectful homomorphisms
$$\pi_0(\mathrm{Symp}_c(T^* S^n)) \to \pi_0(\mathrm{Diff}_c(T^* S^n)) \to \pi_0(\mathrm{Homeo}_c(T^* S^n)) \to \pi_0(\mathrm{hAut}_c(T^* S^n))$$
there are elements also referred to as Dehn twists in the compactly-supported smooth, topological, and homotopical mapping class groups. The goal of this note is to record the orders of these elements.

\begin{theorem}\label{thm:main}
If $n$ is odd then $[\tau]$ has infinite order in all of these mapping class groups.

If $n$ is even then
\begin{enumerate}[(i)]
\item $[\tau] \in \pi_0(\mathrm{Symp}_c(T^* S^n))$ has infinite order.

\item  $[\tau] \in \pi_0(\mathrm{Diff}_c(T^* S^n))$ has order
\begin{enumerate}[(a)]
\item 2 if $n=2$ or $6$, and otherwise,

\item 4 if the $(2n+1)$-dimensional Kervaire sphere is trivial,

\item 8 if the $(2n+1)$-dimensional Kervaire sphere is not trivial.

\end{enumerate}

\item $[\tau] \in \pi_0(\mathrm{Homeo}_c(T^* S^n))$ has order
\begin{enumerate}[(a)]
\item 2 if $n=2$ or $6$,

\item 4 otherwise. 
\end{enumerate}

\item $[\tau] \in \pi_0(\mathrm{hAut}_c(T^* S^n))$ has order
\begin{enumerate}[(a)]
\item 2 if $n=2$ or $6$,

\item 4 otherwise.
\end{enumerate}
\end{enumerate}
\end{theorem}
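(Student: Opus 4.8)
The plan is to exhibit every power $\tau^k$ as the image of the integer $k$ under a single homomorphism into each of the four groups, and to compute the kernel. Via the geodesic-flow model, $\tau$ is obtained by damping the (non-compactly-supported) normalised geodesic flow on $T^*S^n\setminus S^n$ to the identity near $\partial$ and extending it over the zero section; this realises $\tau$ as the image of a generator of $\pi_1(\mathrm{SO}(2))\cong\bZ$ under a natural map $\bZ\to\pi_0(\mathrm{Symp}_c(T^*S^n))$, compatibly with the maps to $\pi_0(\mathrm{Diff}_c)$, $\pi_0(\mathrm{Homeo}_c)$, $\pi_0(\mathrm{hAut}_c)$, so that $\tau^k$ corresponds to $k$. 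Two elementary observations will be used throughout: near the zero section the time-$2\pi$ flow is the linear antipodal map, whose square is the identity, so $\tau^2$ and all even powers can be compactly supported in a collar $ST^*S^n\times\bR$ of $\partial$; and $\tau$ acts by $-1$ on $H_n(T^*S^n)$ when $n$ is even and trivially when $n$ is odd, and in either case is homotopic to the identity once supports are forgotten. We work with $\mathrm{hAut}_c(T^*S^n)\simeq\mathrm{hAut}_\partial(DT^*S^n)$, where $DT^*S^n\simeq S^n$ and $DT^*S^n/\partial=\mathrm{Th}(TS^n)$ has cells only in dimensions $n$ and $2n$.

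The infinite order of $[\tau]$ in $\pi_0(\mathrm{Symp}_c(T^*S^n))$, for every $n$ — which is part (i) together with the symplectic half of the odd statement — is Seidel's theorem, detected by the action of $\tau$ on the graded Fukaya category, and we invoke it. For the remaining odd cases it suffices, since all four groups map compatibly to $\pi_0(\mathrm{hAut}_c(T^*S^n))$, to prove that $[\tau]$ has infinite order there. When $n$ is odd, $\chi(S^n)=0$, so $TS^n$ admits a nowhere-zero section and $H^{n-1}(ST^*S^n;\bQ)\cong\bQ$; restricting self-equivalences of $DT^*S^n$ to the boundary (equivalently, taking the homotopy fibre of $\mathrm{Map}(S^n,S^n)\to\mathrm{Map}(ST^*S^n,S^n)$ induced by the projection) exhibits a rational $\bZ$ in $\pi_0(\mathrm{hAut}_c(T^*S^n))\otimes\bQ$ on which the class of a self-equivalence is read off by pairing its ``loop part'' against this $H^{n-1}$-class, and a direct computation with the geodesic flow shows $[\tau]$ has nonzero image there. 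This finishes the odd case.

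Now let $n$ be even. Since $\tau$ acts by $-1$ on $H_n$, it is nontrivial in $\pi_0(\mathrm{hAut}_c)$, and $\langle\tau\rangle/\langle\tau^2\rangle\cong\bZ/2$, so the order of $[\tau]$ is twice that of $[\tau^2]$; it remains to show the latter is $2$ for $n\notin\{2,6\}$ and $1$ for $n\in\{2,6\}$. As $\tau^2$ lies in a boundary collar, its class is controlled by obstruction theory over $\mathrm{Th}(TS^n)$: the obstruction to $\tau^2$ being homotopic to the identity rel $\partial$ vanishes on the $n$-cell (because $\tau^2_*-\mathrm{id}=0$ on $H_n$ while $H_n(DT^*S^n)\to H_n(DT^*S^n,\partial)$ is multiplication by $\chi=2$) and therefore lies on the top cell, i.e.\ in $\pi_{2n}(S^n)$; using multiplicativity of the variation it is identified with a Whitehead-product class (essentially $[\iota_n,\iota_n]\circ\eta$). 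This class has order $2$ unless $n\in\{2,6\}$, where it vanishes — the classical phenomenon underlying the existence of almost complex structures on $S^2$ and $S^6$ — and checking that these are the only obstructions yields (iv). For the topological group, $\pi_0(\mathrm{Homeo}_c)\to\pi_0(\mathrm{hAut}_c)$ is surjective with kernel controlled by the surgery structure set of the simply connected pair $(DT^*S^n,\partial)$ and by topological pseudoisotopy (which vanishes in this range); since $\mathrm{Th}(TS^n)$ has cells only in dimensions $n$ and $2n$, a diagram chase shows these corrections leave the order of $[\tau]$ unchanged, proving (iii), and hence, with the previous paragraph, also (i).

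Finally (ii). Write $m=\operatorname{ord}[\tau]\in\{2,4\}$ in $\pi_0(\mathrm{Homeo}_c)$; then $\tau^m=1$ homotopically and topologically, so $\operatorname{ord}[\tau]$ in $\pi_0(\mathrm{Diff}_c)$ is $m$ or $2m$ according to whether the obstruction to smoothing a topological isotopy $\tau^m\simeq\mathrm{id}$ vanishes. Smoothing theory locates this obstruction in $\pi_{2n+1}(\mathrm{Top}/\mathrm{O})=\Theta_{2n+1}$ — it is carried by the top cell of the $(2n+1)$-dimensional trace of the isotopy, the lower-dimensional contributions being absorbed — and the parallelisable manifold that the construction produces (a plumbing of tangent bundles of spheres) identifies it with the $(2n+1)$-dimensional Kervaire sphere. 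Thus $\tau^m=1$ in $\pi_0(\mathrm{Diff}_c)$ if and only if that Kervaire sphere is standard: for $n=2,6$ it is standard (the $5$- and $13$-dimensional Kervaire spheres are trivial) and $m=2$, giving (ii)(a); for the other even $n$ one has $m=4$, giving order $4$ or $8$ as in (ii)(b),(c). The two steps carrying the real content are the homotopy-theoretic computation in the previous paragraph — pinning down the $\pi_{2n}(S^n)$-obstruction sharply enough to isolate the exceptions $n=2,6$ — and, the main obstacle, the identification in this last paragraph of the $\Theta_{2n+1}$-obstruction with the Kervaire sphere rather than with some a priori unidentified homotopy sphere: this requires a genuinely geometric argument, exhibiting the bounding parallelisable manifold explicitly and invoking the Kervaire–Arf computation, and is where I expect most of the work to lie.
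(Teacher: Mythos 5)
Your outline diverges from the paper's route (the paper uses the open book $X_{f}$ construction, Kauffman--Krylov's description of $\pi_0(\mathrm{Diff}_c(T^*S^n))$, Milnor--Spanier, and an explicit cross-product isotopy for $n=2,6$), but as it stands it has genuine gaps rather than proofs at exactly the load-bearing points. The most serious one you flag yourself: the identification of the smoothing obstruction for $\tau^m$ with the $(2n+1)$-dimensional Kervaire sphere is deferred ("where I expect most of the work to lie"), yet this is the heart of part (ii). In the paper this is done concretely: $X_{\tau^k}\cong\partial(\#_{A_{k-1}}D^*(S^{n+1}))$ (the $A_{k-1}$ Milnor fibre open book), a cut-and-paste argument showing $X_{\tau^4}\cong(S^{n+1}\times S^n)\#\Sigma_{\mathrm{Kervaire}}$, and Schultz's cancellation theorem to conclude $[\tau^4]=[\Sigma_{\mathrm{Kervaire}}]\in\Theta_{2n+1}$. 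Without some substitute for this, your final paragraph establishes nothing. A second genuine gap is the passage from $\mathrm{hAut}_c$ to $\mathrm{Homeo}_c$: you assert that a surgery/pseudoisotopy "diagram chase shows these corrections leave the order of $[\tau]$ unchanged", but this is precisely the kind of statement that fails in the smooth category (there the order jumps from $4$ to $8$ when the Kervaire sphere is nontrivial), so it cannot be waved through; one must actually show $[\tau^4]$ dies topologically. The paper does this by showing $\tau^4$ is smoothly isotopic to a diffeomorphism supported in a disc (via the Kauffman--Krylov extensions) and then applying the Alexander trick.

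Smaller but still real issues: (a) your identification of the rel-boundary obstruction for $\tau^2$ with "essentially $[\iota_n,\iota_n]\circ\eta$", the claim that it has order $2$ exactly when $n\neq 2,6$, and the claim that it is the only obstruction, are all asserted rather than proved; the paper avoids this homotopy-theoretic computation entirely by noting that $\tau^2\simeq\mathrm{id}$ rel $\partial$ would force $S^*(S^{n+1})\simeq S^{n+1}\times S^n$, contradicting Milnor--Spanier unless $n+1\in\{3,7\}$. (b) Your framework (smoothing theory, pseudoisotopy, structure sets) does not apply to $n=2$, where $T^*S^2$ is a $4$-manifold; the paper handles $n=2$ and $6$ by Kronheimer's explicit cross-product isotopy exhibiting $\tau^2\simeq\mathrm{id}$ smoothly, and for $n=6$ it additionally needs $\Theta_{13}\cong\bZ/3$ to kill $[\tau^2]$ in $\mathrm{Diff}_c$. (c) For $n$ odd, your detection of infinite order in $\pi_0(\mathrm{hAut}_c)$ via a "rational $\bZ$" read off from $H^{n-1}(S^*(S^n);\bQ)$ is only a sketch; the standard (and the paper's) argument is simply that $\tau$ acts on $H_n(D^*(S^n),B;\bZ)$ by a nontrivial unipotent matrix (the Picard--Lefschetz variation), which has infinite order.
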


Recall that the Kervaire spheres of dimensions 5, 13, 29, and 61 are trivial, that the one of dimension 125 may or may not be trivial, and that all others are not trivial \cite{HHR}.

In Section \ref{sec:ACDiffeo} we shall also discuss the order of $\tau$ in the ``almost-complex mapping class group". We do not manage to pin down the order completely, but we show that it is finite and is approximately $n!$.

We do not claim any originality for most of these results: (i) is due to Seidel \cite[Theorem 4.17]{Seidel_graded}, as is the $n=2$ case of (ii) \cite[Lemma 6.3]{Seidel}, where it is attributed to Kronheimer; the rest of (ii) is due to Kauffman--Krylov \cite{KK}. Our contribution, and goal, is to popularise the results of Kauffman--Krylov among sympletic topologists, where they do not seem to be well know, and to explain how these methods imply (iii) and (iv).

\section{The action on a cotangent fibre}\label{seq:CotangentAction}

Let us consider $T^*S^n$ as the interior of a closed disc bundle $D^*(S^n)$, with boundary $S^*(S^n)$, and let $B = S^{n-1} \subset S^*(S^n)$ be the fibre over a basepoint $* \in S^n$. Then there is a $\pi_0(\mathrm{hAut}_c(T^* S^n))$-equivariant short exact sequence
$$0 \to \bZ\{[S^n]\}=H_n(D^*(S^n);\bZ) \to H_n(D^*(S^n), B;\bZ) \to H_{n-1}(B;\bZ) =\bZ\{[B]\}\to 0.$$
This is split by $s([B]) = [D]$ for $D \subset D^*(S^n)$ the fibre over $* \in S^n$. In terms of the basis $\{[S^n], [D]\}$ the element $\tau$ acts on $H_n(D^*(S^n), B;\bZ)$ as
$$\begin{pmatrix}
(-1)^{n+1} & (-1)^{\tfrac{(n+1)(n+2)}{2}} \\
0 & 1
\end{pmatrix},$$
(see \cite[Ch.\ 2, \S 1.3]{ArnoldBook}, and use the choices of orientations made there) so we see that $[\tau] \in \pi_0(\mathrm{hAut}_c(T^* S^n))$ has infinite order if $n$ is odd, and if $n$ is even then 2 divides its order.

If $n$ is odd then of course $[\tau]$ must also have infinite order in the topological, smooth, and symplectic mapping class groups, which proves the odd $n$ part of Theorem \ref{thm:main}. It also provides 2 as a lower bound for the order of $\tau$ when $n$ is even. For later use we record the following further consequence.

\begin{lemma}\label{lem:CotangentAction}
When $n$ is even, each element of $\pi_0(\mathrm{hAut}_c(T^* S^n))$ acts as either
$$\begin{pmatrix}
-1 & (-1)^{\tfrac{n}{2}+1} \\
0 & 1
\end{pmatrix}$$
or as the identity on $H_n(D^*(S^n), B;\bZ)$, in terms of the basis $\{[S^n], [D]\}$.
\end{lemma}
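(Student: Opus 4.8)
The plan is to combine the exact sequence recalled above with a single geometric input, the self-intersection number of the zero section. Write $W = D^*(S^n)$ and $\partial W = S^*(S^n)$, and let $\phi \in \mathrm{hAut}_c(T^*S^n)$. First I would arrange, after a homotopy, that $\phi$ restricts to the identity near $\partial W$ (a standard point, which I would just cite); then $\phi$ is a self-map of the pairs $(W,B)$ and $(W,\partial W)$, fixing $B$ pointwise. Hence $\phi$ acts as the identity on $H_{n-1}(B;\bZ)=\bZ\{[B]\}$, so by the $\pi_0(\mathrm{hAut}_c)$-equivariance of the sequence it acts on $H_n(W,B;\bZ)$ as $\left(\begin{smallmatrix} a & b \\ 0 & 1\end{smallmatrix}\right)$ in the basis $\{[S^n],[D]\}$, for some $b\in\bZ$ and some $a\in\{\pm1\}$ --- the latter because $\phi$, being a homotopy equivalence, acts invertibly on $H_n(W;\bZ)=\bZ\{[S^n]\}$. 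Equivalently, $\phi_*[D] = b\,[S^n]+[D]$ in $H_n(W,B;\bZ)$.

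Next I would push this along the natural, $\phi$-equivariant map $\rho\colon H_n(W,B;\bZ)\to H_n(W,\partial W;\bZ)\cong\bZ$. Two facts are needed: $\rho[D]$ is a generator (it pairs to $\pm1$ with the generator $[S^n]$ of $H_n(W;\bZ)$ under Poincar\'e--Lefschetz duality), and the image of $[S^n]\in H_n(W;\bZ)$ in $H_n(W,\partial W;\bZ)$ equals $\pm2\,\rho[D]$, since that image pairs with $[S^n]$ to give the self-intersection number of the zero section, which is the Euler number of its normal bundle $T^*S^n\cong TS^n$, namely $\chi(S^n)=2$ for $n$ even (the sign fixed by the orientation conventions). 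Applying $\rho$ to $\phi_*[D] = b[S^n]+[D]$ then gives $\phi_*(\rho[D]) = (\pm2b+1)\,\rho[D]$, and applying it to $\phi_*[S^n]=a[S^n]$ gives $\phi_*(\pm2\,\rho[D]) = \pm2a\,\rho[D]$; comparing, $a = \pm2b+1$. As $a=\pm1$, this forces either $(a,b)=(1,0)$ --- in which case $\phi$ acts as the identity on $H_n(W,B;\bZ)$ --- or $a=-1$ and $b=\pm1$.

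Finally, to fix the sign of $b$ when $a=-1$, I would use $\tau$ as a reference point. The computation recalled just before the lemma shows $\tau$ acts on $H_n(W,B;\bZ)$ as $\left(\begin{smallmatrix}-1 & (-1)^{n/2+1}\\ 0 & 1\end{smallmatrix}\right)$, using $\tfrac{(n+1)(n+2)}{2}\equiv \tfrac n2+1 \pmod 2$ for $n$ even. If $\phi$ has $a=-1$ then $\phi\circ\tau^{-1}$ acts on $H_n(W,B;\bZ)$ with top-left entry $+1$, hence --- by the previous paragraph applied to $\phi\circ\tau^{-1}$ --- acts there as the identity; therefore $\phi$ acts exactly as $\tau$, which is the asserted matrix. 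The only mildly delicate step I anticipate is the bookkeeping around the exact sequence together with the claim $\rho[S^n]=\pm2\,\rho[D]$ (equivalently, that the zero section has self-intersection $\pm2$); everything else is naturality and a one-line computation in $GL_2(\bZ)$. Arranging that $\phi$ is the identity near $\partial W$ is standard and can be cited.
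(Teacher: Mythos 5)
Your argument is correct and essentially the paper's: the paper likewise writes $f_*[D]=A[S^n]+[D]$ using the equivariant sequence and then pins down $A$ via equivariance of the intersection pairing of $H_n(D^*(S^n),B;\bZ)$ with $H_n(D^*(S^n);\bZ)$, using $\langle[D],[S^n]\rangle=1$ and the self-intersection $\langle[S^n],[S^n]\rangle=2(-1)^{n/2}$. The only cosmetic difference is that you route the self-intersection $\pm 2$ through $H_n(W,\partial W;\bZ)$ and fix the residual sign by composing with $\tau$ and invoking its known matrix, whereas the paper simply quotes the signed value $2(-1)^{n/2}$ from Arnold's orientation conventions.
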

\begin{proof}
Any $[f] \in \pi_0(\mathrm{hAut}_c(T^* S^n))$ acts on $[S^n]$ by $\epsilon \in \{+1, -1\}$, and fixes the class $[B]$ and so satisfies $f_*[D] = A [S^n] + [D]$ for some $A \in \bZ$. The intersection pairing
$$\langle -, - \rangle : H_n(D^*(S^n), B;\bZ) \otimes H_n(D^*(S^n);\bZ) \to H_0(D^*(S^n);\bZ)=\bZ$$
satisfies $\langle [D], [S^n] \rangle=1$ and $\langle [S^n], [S^n] \rangle = 2 (-1)^{n/2}$ with the choice of orientations we are using \cite[p.\ 56]{ArnoldBook}, so we have
$$1 = \langle [D], [S^n] \rangle = \langle f_*[D], f_*[S^n] \rangle = \langle A[S^n] + [D], \epsilon[S^n] \rangle = (2(-1)^{n/2}A+1)\epsilon$$
and hence $A = \frac{\epsilon-1}{2} (-1)^{n/2}$ as required.
\end{proof}

\section{The cases $n=2$ and $6$}\label{sec:nIs2}

\begin{proposition}
Assume $n=2$ or $6$. Then $[\tau] \in \pi_0 (\Diff_c (T^\ast S^n))$ has order 2.
\end{proposition}

\begin{proof} For the $n=2$ case, this is   \cite[Lemma 6.3]{Seidel}, attributed to Kronheimer. The proof uses the cross-product structure on $\R^3$; we reproduce it here with a slight rephrasing to make it clear that it also works for the $n=6$ case. Let $\times$ denote the standard cross-product on $\R^3$, or a choice of cross-product on $\R^7$ (see \cite[\S 2]{Calabi}); we will make clear which properties we use.

Suppose $u, v \in \R^{n+1}$ are linearly independent. Let $R^\theta_{(u,v)}$ denote the transformation of $\R^{n+1}$ which fixes the orthogonal complement of the plane  $\langle u, v \rangle$, and rotates the plane $\langle u, v \rangle$ by angle $\theta$, with orientation chosen so that $(u,v)$ is positive. Now consider the round sphere $S^n \subset \R^{n+1}$; we will identify $T^\ast S^n$ with $T S^n$ throughout, using the round metric restricted from the  standard inner product on $\bR^{n+1}$. A point $\xi \in T^\ast S^n$ is given by $(u,v) \in \R^{n+1} \times \R^{n+1}$ such that $|u|=1$ and $u \cdot v = 0$. Choose a  smooth function $\theta : \R \to \R$ such that $\theta (r) = 0$ for $r < 1$ and $\theta (r) = 2\pi$ for $r > R$, some fixed constant $R>1$. A representative for $[\tau^2] \in \pi_0 (\Diff_c (T^\ast S^n))$ is given by:
$$
(u,v) \longmapsto \begin{cases}
R^{\theta(|v|)}_{(u,v)}  (u,v) & \quad \text{if } v \neq 0 \\
(u,v) &\quad \text{if } v = 0
\end{cases} 
$$
where we apply the rotation to each vector seperately. 
As long as $v \neq 0$, the three vectors $u,v, u\times v$ are linearly independent. This means that for every $t \in [0,1]$, there is a well-defined map $S_t: T^\ast S^n \to T^\ast S^n $ given by 
$$
S_t (u,v) =
\begin{cases}
(R^{\theta(|v|)}_{((1-t)u+tu \times v,v)}  (u,v) & \quad \text{if } v \neq 0 \\
(u,v) &\quad \text{if } v = 0
\end{cases} 
$$
By construction, $S_0$ is our representative for $\tau^2$; on the other hand, for $t=1$, as $u$ is orthogonal to $u \times v$ and to $v$, $R^\theta_{(u\times v, v)}$ fixes $u$. Thus $S_1$ fixes each cotangent fibre set-wise; on any given fibre, it has support in an annulus, where it acts by a full twist. This can now by undone by deforming the function $\theta$ to be the constant $2 \pi$, giving a compactly supported isotopy to the identity.
\end{proof}

\section{The method of Kauffman and Krylov}

\subsection{Upper bounds: the smooth mapping class group}\label{sec:MCG}

Kauffman and Krylov \cite[Theorem 3]{KK} determine the group $\pi_0(\mathrm{Diff}_c(T^* S^n))$ up to extensions, for $n \geq 3$. For $n$ even their result
takes the form of an extension
$$0 \lra \pi_0(\mathrm{VDiff}_c(T^* S^n))  \lra \pi_0(\mathrm{Diff}_c(T^* S^n))  \overset{\mathrm{Var}}\lra \bZ^\times \lra 0$$
defining the group $\pi_0(\mathrm{VDiff}_c(T^* S^n))$, and a further extension
$$0 \lra \Theta_{2n+1} \lra  \pi_0(\mathrm{VDiff}_c(T^* S^n))   \overset{\chi_r}\lra \begin{cases}
0 & n=6\\
\bZ/2 \oplus \bZ/2 & n \equiv 0 \mod 8\\
\bZ/2 & \text{otherwise}
\end{cases}$$
describing this group. 
(We have used Cerf's theorem \cite{Cerf} that pseudoisotopy implies isotopy for simply-connected manifolds of dimension $\geq 5$ to state their result in terms of isotopy. To address a concern of the referee: while  Cerf's results are formulated only for closed manifolds, the methods extend to manifolds with boundary cf.\ \cite[p.~11]{HatcherWagoner}.) Here the homomorphism $\mathrm{Var}$ is defined using the action on homology described in Lemma \ref{lem:CotangentAction}, $\chi_r$ is a certain homomorphism which we will not need to analyse directly, and $\Theta_{2n+1}$ denotes the group of homotopy $(2n+1)$-spheres, which is isomorphic to $\pi_0(\mathrm{Diff}_\partial(D^{2n}))$ and corresponds to those compactly-supported diffeomorphisms of $T^*S^n$ which are supported in a disc. Of note to us will be the Kervaire sphere $[\Sigma_{\text{Kervaire}}] \in \Theta_{2n+1}$, defined for each even $n$: this is trivial if $n= 6,14,30$, and perhaps $62$, and has order 2 otherwise \cite[Theorem 1.3]{HHR}.

\subsection{Lower bounds: open book decompositions}

Let us write $M = D^*(S^n)$, with $\partial M = S^*(S^n)$. If 
$$f : (M, \partial M) \lra (M, \partial M)$$
is a map which is the identity on the boundary, let
$$X_f := \frac{M \times [0,1]}{(x, 1) \sim (f(x),0)} \cup_{\partial M \times S^1} (\partial M \times D^2)$$
be the manifold obtained from the mapping torus of $f$ by filling in the middle. If $f$ is a diffeomorphism then $X_f$ is a smooth manifold, and if $f'$ is smoothly isotopic to $f$ then $X_{f'}$ is diffeomorphic to $X_f$. 
Similarly in the topological category. If $f$ is a homotopy equivalence then $X_f$ need not be a manifold but will be a Poincar{\'e} duality complex, and depends up to homotopy equivalence only on the homotopy class of $f$. For the identity map we have 
$$X_{\mathrm{Id}_M} = (D^*(S^n) \times S^1) \cup_{S^*(S^n) \times S^1} (S^*(S^n) \times D^2) \cong S^{n+1} \times S^n,$$
so if $X_f$ is not smoothly, or topologically, or homotopically, equivalent to $S^{n+1} \times S^n$, then $[f]$ is not trivial in the corresponding mapping class group. This is the strategy proposed by Kauffman and Krylov \cite[\S 3]{KK} to study the orders of elements. In the case of powers of the Dehn twist the geometric input is as follows.

\begin{proposition}\label{prop:Openbook} Let $\#_{A_l} D^\ast(S^{n+1})$ denote the plumbing of $l$ copies of $D^\ast(S^{n+1})$ in an $A_l$ chain, i.e.~the Milnor fibre of the $A_l$ singularity. 
For $k \geq 2$ there are diffeomorphisms
$$X_{\tau^k} \cong \partial (\#_{A_{k-1}} D^\ast(S^{n+1})).$$
\end{proposition}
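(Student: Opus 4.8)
The plan is to exhibit both sides of the asserted diffeomorphism as, respectively, the open book and the boundary of one and the same object: the Milnor fibre of the $A_{k-1}$ singularity, viewed as a Lefschetz fibration over the disc via its first coordinate.

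First I would recall the classical identification of $\#_{A_{k-1}}D^\ast(S^{n+1})$ with a compact model $W$ of the Milnor fibre
\[
W=\{\,z_0^{\,k}+z_1^2+\dots+z_{n+1}^2=\epsilon\,\}\cap\bar B^{2n+4}_\delta\subset\C^{n+2},\qquad 0<|\epsilon|\ll\delta\ll 1,
\]
which is the content of the last clause of the proposition's statement; thus $\partial W$ is the Brieskorn manifold $\Sigma(k,2,\dots,2)$ (with $n+1$ twos), a $(2n+1)$-manifold, whose diffeomorphism type is what must be computed. Next I would analyse the projection $p\colon W\to\C$, $z\mapsto z_0$. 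Its fibre over $w$ is the affine quadric $\{z_1^2+\dots+z_{n+1}^2=\epsilon-w^{k}\}$, which is smooth and diffeomorphic to $T^\ast S^n$ whenever $w^{k}\neq\epsilon$ and acquires a single ordinary double point at each of the $k$ roots of $w^k=\epsilon$. Solving for $z_0$ in terms of $(z_1,\dots,z_{n+1})$ near such a point shows it is a complex-Morse critical point of $p$ whose vanishing cycle is the real locus $\{z_i\in\R,\ \sum z_i^2=\mathrm{const}\}$, i.e.\ the zero section $S^n\subset T^\ast S^n$. Hence, after intersecting with a large disc in the $z_0$-plane and rounding corners, $p$ presents $W$ as the total space of a Lefschetz fibration over the disc, with fibre a compact model $D^\ast(S^n)$ of $T^\ast S^n$ and $k$ critical values, each contributing a boundary Dehn twist $\tau$ to the monodromy; since all $k$ vanishing cycles are the same zero section, the total monodromy around the boundary circle is isotopic to $\tau^{\pm k}$ (the sign depends only on orientation conventions and is immaterial, since reversing the mapping torus gives $X_f\cong X_{f^{-1}}$).

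Finally I would invoke the standard observation that the boundary of the total space of a Lefschetz fibration $E\to D^2$ is the open book of its monodromy: $\partial E$ is the union of the horizontal boundary, a mapping torus of the monodromy on the page, with the vertical boundary $D^2\times\partial(\text{page})$, glued along $S^1\times\partial(\text{page})$, and since the monodromy is supported away from $\partial(\text{page})$ this is precisely the manifold $X_g$ constructed in Section~\ref{seq:CotangentAction}'s sequel, with $g$ the monodromy. Applied to $p$ this yields $\partial W\cong X_{\tau^{\pm k}}\cong X_{\tau^k}$, which is the proposition. (As a sanity check, the same argument with $k=1$ recovers the familiar fact that the open book of a single Dehn twist on $T^\ast S^n$ is $S^{2n+1}=\partial D^{2n+2}$.)

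The hard part will be the last step of the Lefschetz-fibration analysis: the Milnor fibre is a manifold with corners on which $p$ does not literally restrict to a fibration near $\partial W=W\cap S_\delta$, so one must round the corner along $S^1\times\partial(\text{page})$ carefully enough that the result matches the explicit cut-and-paste description of $X_f$ used in the paper, and one should check that the $k$ vanishing spheres may be taken supported in disjoint collar neighbourhoods of a single copy of the zero section, so that the total monodromy really is the literal $k$-th power of the paper's standard Dehn twist $\tau$ and not merely some product of $k$ Dehn twists in isotopic spheres. These points are routine in the Lefschetz-fibration/open-book literature (and underlie the strategy of \cite{KK}), but they are exactly where care is required.
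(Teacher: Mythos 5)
Your proposal is correct and follows essentially the same route as the paper: identify $\#_{A_{k-1}}D^\ast(S^{n+1})$ with the Milnor fibre of the $A_{k-1}$ singularity, project to $z_0$ to obtain a Lefschetz fibration over the disc with fibre $D^\ast(S^n)$ and $k$ critical points whose vanishing cycle is the zero section, and read off the boundary as the open book of the total monodromy $\tau^k$. The extra care you flag (corner rounding, putting all vanishing cycles on the literal zero section, the sign of the monodromy) is indeed the routine part that the paper leaves implicit.
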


\begin{proof}
This will be familiar to symplectic topologists.
The plumbing of  $(k-1)$ copies of $D^\ast(S^{n+1})$ is diffeomorphic to the Milnor fibre of the $A_{k-1}$ singularity in $n+2$ variables, i.e.~$B_R(0) \cap \{ z_0^{k} + z_1^2 + \ldots + z_{n+1}^2 =1 \} \subset \bC^{n+2}$, for any suitably large $R$.
By projecting to the first coordinate, this is the total space of a Lefschetz fibration with fibre the $A_1$ Milnor fibre one dimension down, i.e.~$D^\ast(S^n)$, and $k$ Morse critical points, all with vanishing cycle the zero section $S^n \subset D^\ast(S^n)$. In particular, the total monodromy of this Lefschetz fibration is $\tau^k_{S^n}$. This equips the boundary of its total space, i.e.~$\partial (\#_{A_{k-1}} D^\ast(S^{n+1}))$, with the claimed open book decomposition.
\end{proof}

 For a general introduction to open book decompositions of Brieskorn spheres, see  \cite[Section 4]{Kwon-vanKoert}.

\begin{corollary}\label{cor:Orders}\mbox{}
\begin{enumerate}[(i)]
\item If $n \neq 2,6$ then $[\tau^2] \in \pi_0(\mathrm{hAut}_c(T^* S^n))$ is nontrivial.

\item We have $[\tau^4] = [\Sigma_{\text{Kervaire}}] \in \Theta_{2n+1} \leq  \pi_0(\mathrm{Diff}_c(T^* S^n))$.
\end{enumerate}
\end{corollary}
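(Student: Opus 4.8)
The plan for (i): when $n$ is odd the matrix recorded in \S\ref{seq:CotangentAction} shows $[\tau^2]$ already acts nontrivially on $H_n(D^\ast(S^n),B)$, so I may assume $n$ is even and $n\neq 2,6$. By Proposition~\ref{prop:Openbook} (with $k=2$) we have $X_{\tau^2}\cong\partial D^\ast(S^{n+1})=S(TS^{n+1})$, the unit tangent bundle of $S^{n+1}$, i.e.\ the Stiefel manifold $V_2(\bR^{n+2})$, whereas $X_{\mathrm{Id}}\cong S^{n+1}\times S^n$; since $f\mapsto X_f$ is a homotopy invariant of $[f]\in\pi_0(\mathrm{hAut}_c(T^\ast S^n))$, it is enough to show $S(TS^{n+1})\not\simeq S^{n+1}\times S^n$. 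This is classical: such a homotopy equivalence would, after composing with the two projections, produce a section of the bundle $S^n\to S(TS^{n+1})\to S^{n+1}$ together with a map $S(TS^{n+1})\to S^n$ of degree one on each fibre, and by the homotopy theory of sphere bundles over spheres (James--Whitehead) this forces $TS^{n+1}$ to be fibre homotopy trivial and hence, by Adams' Hopf-invariant-one theorem, $n+1\in\{1,3,7\}$ --- contradicting $n\neq 2,6$. I would include the elementary part of this implication together with a precise reference.

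For (ii), where $n$ is even so that $\Sigma_{\text{Kervaire}}\in\Theta_{2n+1}$ makes sense, I would first show $[\tau^4]$ lands in $\Theta_{2n+1}$ using the Kauffman--Krylov filtration of \S\ref{sec:MCG}. The variation of $\tau$ is $-1\in\bZ^\times$ (\S\ref{seq:CotangentAction}), so $[\tau^2]\in\pi_0(\mathrm{VDiff}_c(T^\ast S^n))$; since $\chi_r$ is a homomorphism into a group of exponent two, $\chi_r([\tau^4])=\chi_r([\tau^2])^2=0$, whence $[\tau^4]\in\Theta_{2n+1}$. A diffeomorphism representing a class in $\Theta_{2n+1}$ is supported in a disc, so $X_{\tau^4}\cong X_{\mathrm{Id}}\#\Sigma=(S^{n+1}\times S^n)\#\Sigma$ with $\Sigma=[\tau^4]$; combined with Proposition~\ref{prop:Openbook}, the homotopy sphere $\Sigma$ is then characterised by $(S^{n+1}\times S^n)\#\Sigma\cong\partial(\#_{A_3}D^\ast(S^{n+1}))$, and it remains only to identify it.

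The identification I would carry out via the intersection form of the parallelisable $(2n+2)$-manifold $W:=\#_{A_3}D^\ast(S^{n+1})$, an $n$-connected handlebody with three $(n+1)$-handles. On $H_{n+1}(W)\cong\bZ^3$ the form is the skew-symmetric $A_3$-chain form (self-intersections vanish since $n+1$ is odd); it has one-dimensional radical spanned by $e_1+e_3$ and torsion-free cokernel, which recovers $H_\ast(\partial W;\bZ)\cong H_\ast(S^{n+1}\times S^n;\bZ)$. The parallelisation equips $H_{n+1}(W)$ with a quadratic refinement $q\colon\bZ^3\to\bZ/2$ taking the value $1$ on each zero section, as in the Kervaire plumbing; since $q(e_1+e_3)=q(e_1)+q(e_3)+e_1\cdot e_3=0$ it descends to the nonsingular quotient $\bZ^3/\langle e_1+e_3\rangle\cong\bZ^2$, on which $\mathrm{Arf}(q)=q(\bar e_1)q(\bar e_2)=1$. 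Thus $(W,q)$ coincides, as a handlebody with quadratic refinement, with $(D^{n+1}\times S^{n+1})\,\natural\,(\#_{A_2}D^\ast(S^{n+1}))$, whose boundary is $(S^{n+1}\times S^n)\#\Sigma_{\text{Kervaire}}$; by the classification of such plumbings (Wall, Browder, Hirzebruch--Mayer), for $n\neq 2,6$ this gives $\partial W\cong(S^{n+1}\times S^n)\#\Sigma_{\text{Kervaire}}$, so $[\tau^4]=[\Sigma_{\text{Kervaire}}]$. For $n=2,6$ the statement follows instead from the order-$2$ result of \S\ref{sec:nIs2} together with the triviality of the Kervaire spheres in dimensions $5$ and $13$.

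The main obstacle is making this last step airtight: running the plumbing and handlebody bookkeeping in the degenerate range where $\partial W$ fails to be a homotopy sphere, and in particular justifying the normalisation $q(e_i)=1$ and that no invariant beyond the Arf invariant of the nonsingular quotient form separates $\partial W$ from the trivial product --- here one uses that $2n+2\equiv 2\pmod 4$, so the signature obstruction is vacuous and $bP_{2n+2}$ is cyclic, generated by $\Sigma_{\text{Kervaire}}$. An alternative that avoids re-proving this is simply to cite Brieskorn's and Hirzebruch--Mayer's computation of the diffeomorphism type of the Brieskorn manifold $\Sigma(4,2,\dots,2)$.
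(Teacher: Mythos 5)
Your part (ii) stops one step short of the conclusion. Having shown $X_{\tau^4}\cong (S^{n+1}\times S^n)\#\Sigma$ with $\Sigma$ the homotopy sphere corresponding to $[\tau^4]\in\Theta_{2n+1}$, and (by whatever route) $\partial(\#_{A_3}D^\ast(S^{n+1}))\cong (S^{n+1}\times S^n)\#\Sigma_{\text{Kervaire}}$, you pass directly from the diffeomorphism $(S^{n+1}\times S^n)\#\Sigma\cong (S^{n+1}\times S^n)\#\Sigma_{\text{Kervaire}}$ to $[\tau^4]=[\Sigma_{\text{Kervaire}}]$. But connected sum with a homotopy sphere cannot in general be cancelled: what you need is precisely that $\Sigma\#(-\Sigma_{\text{Kervaire}})$ does not lie in the inertia group of $S^{n+1}\times S^n$, and inertia groups are not trivial for arbitrary manifolds. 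The paper closes this step by citing Schultz \cite[Theorem A]{Schultz}, which gives the vanishing of the inertia group of a product of spheres in the relevant dimensions; without invoking this (or an equivalent), your argument only identifies $\Sigma$ with $\Sigma_{\text{Kervaire}}$ modulo $I(S^{n+1}\times S^n)$.

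Otherwise the architecture matches the paper. Part (i) is essentially the paper's argument: a homotopy from $\tau^2$ to the identity would make $S^\ast(S^{n+1})$ homotopy equivalent to $S^{n+1}\times S^n$, forcing a fibre homotopy trivialisation of $TS^{n+1}$ and hence $n+1\in\{1,3,7\}$; the paper cites Milnor--Spanier \cite[Theorem 2]{MS} where you cite James--Whitehead and Adams, and either reference works. Your derivation of $[\tau^4]\in\Theta_{2n+1}$ from the Kauffman--Krylov extensions is also the paper's. Where you genuinely diverge is in identifying $\partial(\#_{A_3}D^\ast(S^{n+1}))$: the paper cuts the $A_3$-plumbing into a copy of $D^\ast(B^{n+1})$ glued to the $A_2$-plumbing and reads off the boundary as $(S^{n+1}\times S^n)\#\Sigma_{\text{Kervaire}}$ by a short direct argument needing no classification theorems, whereas you appeal to the classification of $(2n+2)$-dimensional handlebodies by refined intersection forms. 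As you note yourself, that route needs care: Wall's classifying invariant is the normal-bundle map $\alpha\colon H_{n+1}(W)\to\pi_{n}(\SO(n+1))$, not merely the $\bZ/2$-valued refinement $q$, and the normalisation $q(e_i)=1$ uses $n+1\notin\{1,3,7\}$ (you rightly divert $n=2,6$ to Section \ref{sec:nIs2}, where triviality of the Kervaire spheres in dimensions $5$ and $13$ makes the claim immediate). Citing Brieskorn or Hirzebruch--Mayer for the diffeomorphism type of $\Sigma(4,2,\dots,2)$, as you propose, would settle that part cleanly --- but even then the inertia-group input above is still required to convert the diffeomorphism type of $X_{\tau^4}$ into the class of $[\tau^4]$ in $\Theta_{2n+1}$.
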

\begin{proof}
For (i), if $\tau^2 : (M, \partial M) \to (M, \partial M)$ were homotopic relative to the boundary to the identity, then we would have a homotopy equivalence
$$S^*(S^{n+1}) \cong X_{\tau^2} \simeq X_{\mathrm{Id}_M} \cong S^{n+1} \times S^n,$$
so we must show that this is not the case. If it were the case then there would be a map
$$S^*(S^{n+1}) \simeq S^{n+1} \times S^n \overset{\pi_2}\lra S^n$$
which is an isomorphism on $n$th homology, and this would give a fibre homotopy trivialisation of the spherical fibration $S^*(S^{n+1}) \to S^{n+1}$. By a theorem of Milnor and Spanier \cite[Theorem 2]{MS} this implies that $n+1$ is ($1$,) $3$  or $7$.

For (ii), first note that by the group extensions described in Section \ref{sec:MCG} we have $[\tau^4] \in \Theta_{2n+1} \leq  \pi_0(\mathrm{Diff}_c(T^* S^n))$, so $[\tau^4] = [\Sigma]$ for some homotopy sphere $\Sigma$. In other words $\tau^4$ may be represented up to isotopy by a diffeomorphism $f$ of a disc $D^{2n} \subset D^*(S^n)$ relative to its boundary, and $\Sigma$ is the exotic sphere obtained by gluing two $(2n+1)$-discs along the diffeomorphism
$$f \cup \mathrm{Id}_{D^{2n}} : S^{2n} = D^{2n} \cup_\partial D^{2n} \lra D^{2n} \cup_\partial D^{2n} = S^{2n}.$$
But then $X_{\tau^4} \cong X_f$, and $X_f \cong X_{\mathrm{Id}_M} \# \Sigma = (S^{n+1} \times S^n) \# \Sigma$. Now recall that
$$\Sigma_{\text{Kervaire}} = \partial (\#_{A_2} D^*(S^{n+1})).$$
On the other hand, 
$$X_{\tau^4} \cong \partial (\#_{A_3} D^*(S^{n+1})).$$
`Cutting off' the plumbing of the first two spheres from $\#_{A_3} D^*(S^{n+1})$ leaves a copy of $D^\ast B^{n+1}$. This is glued to $\#_{A_2} D^*(S^{n+1})$ along  its vertical boundary, $\partial_v (D^\ast(B^{n+1}))  \cong  B^{n+1} \times S^{n}$, which naturally lies in $\partial (\#_{A_2} D^*(S^{n+1}))$. Let us write $\partial_h (D^\ast(B^{n+1}))  \cong  S^n \times B^{n+1}$ for the horizontal boundary, and $\partial_c \cong S^n \times S^n$ for the corners. We get
\begin{multline*}
X_{\tau^4} 
\cong \partial_h (D^*(B^{n+1})) \cup_{\partial_c} \big( \partial (\#_{A_2} D^*(S^{n+1})) \backslash \partial_v (D^\ast(B^{n+1}))  \big) \\
\cong (S^n \times B^{n+1}) \cup_{S^n \times S^n} \big( \Sigma_{\text{Kervaire}} \backslash (B^{n+1} \times S^{n}) \big)
\cong (S^{n} \times S^{n+1}) \# \Sigma_{\text{Kervaire}}
\end{multline*}

Thus we have a diffeomorphism $(S^{n+1} \times S^n) \# \Sigma_{\text{Kervaire}} \cong (S^{n+1} \times S^n) \# \Sigma$, but then by e.g.\ \cite[Theorem A]{Schultz} it follows that $[\Sigma]=[\Sigma_{\text{Kervaire}}]$.
\end{proof}

\begin{proof}[Proof of Theorem \ref{thm:main}]
The case $n$ odd was covered in Section \ref{seq:CotangentAction}, so we suppose that $n$ is even. As we have mentioned (i) is due to Seidel \cite[Theorem 4.17]{Seidel_graded}, and we have discussed the $n=2$ and $6$ cases in Section \ref{sec:nIs2}. So we suppose that $n \geq 4$ is even.

For (ii), we start by using $[\tau^4] = [\Sigma_{\text{Kervaire}}]$ from Corollary \ref{cor:Orders} (ii), and that $[\Sigma_{\text{Kervaire}}] \in \Theta_{2n+1}$ has order 2, to say that $[\tau]$ has order dividing 8. If the Kervaire sphere is nontrivial (i.e.\ $n \neq 6, 14, 30$ and perhaps $62$) it follows $[\tau]$ has order precisely 8. Otherwise $[\tau]$ has order dividing 4. By Corollary \ref{cor:Orders} (i) if $n \neq 6$ then $[\tau]$ does not have order 2 (even up to homotopy), so has order precisely 4. In the remaining case $n=6$ the extensions of Section \ref{sec:MCG} show that $[\tau^2] \in \Theta_{13} \cong \bZ/3$, so as $[\tau^2]^2 = [\tau^4] = [\Sigma_{\text{Kervaire}}]$ is trivial in this group of odd order, $[\tau^2]$ is trivial too.

As $\tau^2$ is smoothly isotopic to the identity for $n=2$ or $6$, it is also topologically isotopic and homotopic to the identity in these cases, which establishes part (a) of (iii) and (iv). As $\tau^4$ is smoothly isotopic to a diffeomorphism supported in a disc, it follows from the Alexander trick that $[\tau^4]$ is trivial in $\pi_0(\mathrm{Homeo}_c(T^* S^n))$ and hence in $\pi_0(\mathrm{hAut}_c(T^* S^n))$. As $[\tau^2]$ is nontrivial in $\pi_0(\mathrm{hAut}_c(T^* S^n))$ by Corollary \ref{cor:Orders} (i), part (b) of (iii) and (iv) follows.
\end{proof}

\section{Almost-complex diffeomorphisms}\label{sec:ACDiffeo}

There is a further kind of mapping class group between the symplectic and smooth ones, making use of the standard complex structure $\ell$ on $T^*S^n$, namely: the group of isotopy classes of pairs of a compactly-supported  diffeomorphism $f : T^* S^n \to T^*S^n$ and a compactly-supported path of complex structures from $f^*\ell$ to $\ell$. We call this the \emph{almost-complex mapping class group} of $T^* S^n$. Equivalently, but somewhat more formally, it may be described as the fundamental group based at $\ell$ of the homotopy orbit space
$$AC_c(T^*S^n) \hcoker \mathrm{Diff}_c(T^* S^n)$$
given by the action of the group of compactly-supported diffeomorphisms on the space of almost-complex structures on the manifold $T^*S^n$ which agree with $\ell$ outside a compact set. (We consider an almost-complex structure as an endomorphism of the tangent bundle squaring to $-\mathrm{Id}$, and give the space of such the compact-open topology.)

 In this case our results are not as conclusive as Theorem \ref{thm:main}, but informally say that in the almost-complex mapping class group the Dehn twist $\tau$ has order approximately $n!$. As the results are not completely conclusive we do not try to carefully distinguish the various cases of $n$ modulo 8: doing so would lead to some slight improvements.
 
\begin{question}
What is the precise order of the Dehn twist in the almost-complex mapping class group?
\end{question}

\subsection{Upper bounds}

To obtain an upper bound we consider the space $AC_c(T^*S^n)$ of almost-complex structures on $T^*S^n$ which agree with the standard one outside a compact set. 

\begin{lemma}
If $n$ is even, $\pi_1(AC_c(T^*S^n))$ is finite of order dividing $2 \cdot n!$.
\end{lemma}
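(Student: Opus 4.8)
The plan is to identify $AC_c(T^*S^n)$ up to homotopy with a mapping space and then compute $\pi_1$ by obstruction theory. The space of almost-complex structures on a manifold $W$ with a fixed one outside a compact set is, by the usual argument, the space of sections, fixed at infinity, of a bundle over $W$ whose fibre is $\OO(2m)/\mathrm{U}(m)$ where $2m = \dim W = 2n$ here. Since $T^*S^n$ is parallelisable (as $S^n$ is stably parallelisable and $T^*S^n \cong TS^n$, and in fact $TS^n \oplus \epsilon^1$ is trivial so $T(TS^n)$ is trivial), this bundle is trivial, and compactly-supported sections form the based mapping space $\Map_*(S^{2n}, \OO(2n)/\mathrm{U}(n))$, one-point compactifying $T^*S^n$. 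Hence $\pi_1(AC_c(T^*S^n)) \cong \pi_{2n+1}(\OO(2n)/\mathrm{U}(n))$.

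Next I would feed in the classical computation of the homotopy groups of $\OO/\mathrm{U}$ in the relevant range. Unstably one has the fibration $\OO(2n)/\mathrm{U}(n) \to \OO(2n+2)/\mathrm{U}(n+1) \to S^{2n}$ (the fibre of the map to the next stage), so $\OO(2n)/\mathrm{U}(n) \to \OO/\mathrm{U}$ is highly connected, and in particular an isomorphism on $\pi_{2n+1}$ once $2n+1$ is below the relevant range; more carefully, from the fibration sequence $\ldots \to \pi_{2n+2}(S^{2n}) \to \pi_{2n+1}(\OO(2n)/\mathrm{U}(n)) \to \pi_{2n+1}(\OO/\mathrm{U}) \to \pi_{2n+1}(S^{2n}) \to \ldots$ one controls the unstable correction by the (finite, order dividing $2$) groups $\pi_{2n+1}(S^{2n})$ and $\pi_{2n+2}(S^{2n})$. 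By Bott periodicity $\OO/\mathrm{U} \simeq \Omega^{-1}\OO$ in the appropriate sense, so $\pi_{2n+1}(\OO/\mathrm{U}) \cong \pi_{2n+2}(\OO) \cong \pi_{2n+2}(\OO/\OO(2n+2))$-corrected $\cong KO^{-(2n+2)}(\mathrm{pt})$ stably, which for $n$ even is one of $0$, $\bZ/2$; the stable value is $0$ when $2n+2 \equiv 0 \bmod 8$ is false in the finite part—i.e.\ $\pi_j\OO$ for $j$ in the residues hit by $2n+2$ with $n$ even are $\bZ/2, \bZ, 0, 0$ depending on $n \bmod 4$, all finite or mod out to finite after accounting for the $\bZ$ (which maps to $\bZ$ detected by a Chern/Pontryagin number and does not contribute torsion). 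Assembling: $\pi_{2n+1}(AC_c(T^*S^n))$ is finite, and a bound on its order comes from the orders of the pieces $\pi_{2n+1}(S^{2n})$, $\pi_{2n+2}(S^{2n})$ and the stable $2$-torsion in $\OO$, whose product I would check divides $2 \cdot n!$.

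The main obstacle, and where care is needed, is the last numerical step: showing the product of these orders actually divides $2 \cdot n!$ rather than merely being finite. The stable and near-stable homotopy of $\OO$ contributes only $2$-primary torsion of bounded (in fact order $\le 2$ in each relevant degree) size, and $\pi_{2n+1}(S^{2n}) \cong \bZ/2$, $\pi_{2n+2}(S^{2n}) \cong \bZ/2$ (for $n$ large), so the naive product is a small power of $2$—comfortably dividing $2 \cdot n!$ once $n \ge 2$; the low values of $n$ must be checked by hand. The one genuine subtlety is whether the $\bZ$-summand of $\pi_{2n+2}(\OO)$ (present when $2n+2 \equiv 0,4 \bmod 8$) survives into $\pi_{2n+1}(AC_c(T^*S^n))$: it does not, because it is detected by a characteristic number of the corresponding almost-complex structure which, for a compactly-supported family over a parallelisable manifold with vanishing relevant Pontryagin data, must vanish—equivalently, the composite $\pi_{2n+1}(\OO(2n)/\mathrm{U}(n)) \to \pi_{2n+1}(\OO/\mathrm{U}) \cong \bZ \oplus (\text{torsion})$ lands in the torsion because of the unstable constraint coming from the fibration over $S^{2n}$ above. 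Pinning down that this free part really is killed is the crux; the rest is bookkeeping with Bott periodicity and the stable stems of spheres.
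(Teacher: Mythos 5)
Your reduction to a single based mapping space is where the argument first goes wrong. Collapsing the end of $T^*S^n$ does not give $S^{2n}$: the one-point compactification of the total space of $TS^n\cong T^*S^n$ is the Thom space of $TS^n$, with cells in dimensions $n$ and $2n$, not a $2n$-sphere (and in any case the structure at infinity is a fixed, generally non-constant, map $\ell$ on $S^*(S^n)$, so one does not directly get a \emph{based} mapping space). The correct model is $\mathrm{map}_\partial\big(D^*(S^n),\mathrm{O}(2n)/\mathrm{U}(n)\big)$ relative to $\ell$ on the boundary; since $D^*(S^n)$ is obtained from $S^*(S^n)$ by attaching an $n$-cell and a $2n$-cell, $\pi_1$ of this space is caught in an exact sequence between $\pi_{2n+1}\big(\mathrm{O}(2n)/\mathrm{U}(n)\big)$ and $\pi_{n+1}\big(\mathrm{O}(2n)/\mathrm{U}(n)\big)\cong\pi_{n+1}(\mathrm{O}/\mathrm{U})$, and the latter is $\bZ/2$ when $n\equiv 6 \bmod 8$; your model omits this $n$-cell contribution entirely.

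The more serious gap is the computation of $\pi_{2n+1}\big(\mathrm{O}(2n)/\mathrm{U}(n)\big)$. Degree $2n+1$ is far outside the stable range: the fibrations $\mathrm{O}(2m)/\mathrm{U}(m)\to\mathrm{O}(2m+2)/\mathrm{U}(m+1)\to S^{2m}$ make the comparison map to $\mathrm{O}/\mathrm{U}$ only about $(2n-2)$-connected, so reaching degree $2n+1$ requires at least two stages, and then the boundary maps $\pi_{2m}(S^{2m})\cong\bZ\to\pi_{2m-1}\big(\mathrm{O}(2m)/\mathrm{U}(m)\big)$ enter; their cokernels carry cyclic torsion of order roughly $n!$ (the same mechanism producing $\pi_{2n}(\mathrm{U}(n))\cong\bZ/n!$). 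The unstable correction is therefore not controlled by $\pi_{2n+1}(S^{2n})$ and $\pi_{2n+2}(S^{2n})$, and your conclusion that the group is a small $2$-group is false: for $n$ even the stable group $\pi_{2n+1}(\mathrm{O}/\mathrm{U})\cong\pi_{2n+2}(\mathrm{O})$ actually vanishes (as $2n+2\equiv 2,6\bmod 8$; note also that $\bZ$-summands of $\pi_*(\mathrm{O})$ occur in degrees $\equiv 3,7\bmod 8$, not $0,4$, so the "crux" you identify is moot), and the whole group is unstable, of order $2\cdot n!$ for $n\equiv 0\bmod 4$ and $n!$ for $n\equiv 2\bmod 4$ --- this is exactly the input the paper takes from Harris's tables. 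So the genuinely needed ingredients are (a) the two-cell relative structure of $D^*(S^n)$, giving the extra $\pi_{n+1}$ term, and (b) an unstable computation of $\pi_{2n+1}\big(\mathrm{O}(2n)/\mathrm{U}(n)\big)$; the bound $2\cdot n!$ then follows because the $\bZ/2$ from $\pi_{n+1}$ appears only when $n\equiv 6\bmod 8$, where $\pi_{2n+1}$ has order $n!$. Your sketch, if its numbers were right, would prove a much stronger (and incorrect) statement, which is a sign the homotopy-theoretic bookkeeping has gone astray rather than a harmless over-estimate.
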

\begin{proof}
We may instead work with $AC_\partial(D^*(S^n))$, the space of almost-complex structures on $D^*(S^n)$ which are standard near the boundary. The space of almost-complex structures on the vector space $\bR^{2n}$ is $\tfrac{O(2n)}{U(n)}$. As the tangent bundle of $D^*(S^n)$ is trivial, there is an equivalence $AC_\partial(D^*(S^n)) \simeq \mathrm{map}_\partial(D^*(S^n), \tfrac{O(2n)}{U(n)})$, to the space of continuous maps extending a fixed map $\ell: S^*(S^n) \to \tfrac{O(2n)}{U(n)}$. As $D^*(S^n)$ is obtained from $S^*(S^n)$ by attaching an $n$-cell and a $2n$-cell, there is a homotopy fibre sequence
$$\Omega^{2n}\big(\tfrac{O(2n)}{U(n)}\big) \lra \mathrm{map}_\partial \big(D^*S^n, \tfrac{O(2n)}{U(n)}\big) \lra \Omega^n\big(\tfrac{O(2n)}{U(n)}\big),$$
and hence an exact sequence
$$\cdots \lra \pi_{2n+1}\big(\tfrac{O(2n)}{U(n)}\big) \lra \pi_1(AC_\partial(D^*(S^n))) \lra \pi_{n+1}\big(\tfrac{O(2n)}{U(n)}\big) \lra \cdots.$$

We have $\pi_{n+1}\big(\tfrac{O(2n)}{U(n)}\big) \cong \pi_{n+1}\big(\tfrac{O}{U}\big)$ and the latter are given by Bott periodicity \cite{Bott1} as $\bZ/2$ if $n \equiv 6 \mod 8$ and 0 otherwise.
(Recall that $O/U \simeq \Omega^2(\mathbb{Z} \times BO)$ and that          we are assuming $n$ is even when we
     say ``otherwise''.)
 From the table in \cite{Harris} we find that $\pi_{2n+1}\big(\tfrac{O(2n)}{U(n)}\big)$ has order $2 \cdot n!$ if $n \equiv 0 \mod 4$ and order $n!$ if $n \equiv 2 \mod 4$. Combining these gives the conclusion.
\end{proof}

\begin{corollary}
In the almost-complex mapping class group of $T^*S^n$ with $n$ even the Dehn twist has finite order dividing $2^4 \cdot n!$.
\end{corollary}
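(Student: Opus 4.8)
The plan is to combine the just-proved bound $|\pi_1(AC_c(T^*S^n))| \mid 2\cdot n!$ with the fibration defining the almost-complex mapping class group, and then control the contribution of the smooth mapping class group using Theorem~\ref{thm:main}(ii).

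First I would set up the fibration sequence. By definition the almost-complex mapping class group is $\pi_1$ of the homotopy orbit space $AC_c(T^*S^n)\hcoker \mathrm{Diff}_c(T^*S^n)$, based at $\ell$; there is a fibration
$$AC_c(T^*S^n) \lra AC_c(T^*S^n)\hcoker \mathrm{Diff}_c(T^*S^n) \lra B\mathrm{Diff}_c(T^*S^n),$$
which on $\pi_1$ yields an exact sequence
$$\pi_1(AC_c(T^*S^n)) \lra \pi_1\big(AC_c(T^*S^n)\hcoker \mathrm{Diff}_c(T^*S^n)\big) \lra \pi_0(\mathrm{Diff}_c(T^*S^n))$$
(together with $\pi_1(B\mathrm{Diff}_c)=\pi_0(\mathrm{Diff}_c)$). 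The image of the boundary-type map into $\pi_0(\mathrm{Diff}_c(T^*S^n))$ consists precisely of those isotopy classes of diffeomorphisms $f$ for which $f^*\ell$ can be connected to $\ell$ through a compactly-supported path of almost-complex structures — in particular the Dehn twist $\tau$ maps to the Dehn twist in $\pi_0(\mathrm{Diff}_c(T^*S^n))$, so the order of $\tau$ in the almost-complex mapping class group is finite provided (a) its image in $\pi_0(\mathrm{Diff}_c(T^*S^n))$ has finite order and (b) the kernel of the map to $\pi_0(\mathrm{Diff}_c)$ is finite. Point (a) is exactly Theorem~\ref{thm:main}(ii): the order of $[\tau]\in\pi_0(\mathrm{Diff}_c(T^*S^n))$ divides $8$. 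Point (b) follows because the kernel is a quotient of $\pi_1(AC_c(T^*S^n))$, which by the preceding lemma has order dividing $2\cdot n!$.

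Now I would assemble the numerical bound. Let $m$ be the order of $[\tau]$ in the almost-complex mapping class group. Then $[\tau]^8$ maps to $[\tau^8]=0$ in $\pi_0(\mathrm{Diff}_c(T^*S^n))$ (using that $8$ kills $[\tau]$ there), hence $[\tau]^8$ lies in the kernel of the map to $\pi_0(\mathrm{Diff}_c)$, which is a quotient of $\pi_1(AC_c(T^*S^n))$ and so is killed by $2\cdot n!$. Therefore $[\tau]^{8\cdot 2\cdot n!}=[\tau]^{16\cdot n!}$ is trivial, i.e.\ $m \mid 2^4\cdot n!$, which is the claim.

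The only genuinely delicate point is verifying that the image of $[\tau]$ in $\pi_0(\mathrm{Diff}_c(T^*S^n))$ under the fibration map really is the smooth Dehn twist class — one must check that forgetting the path of complex structures in the homotopy-orbit description corresponds to the neglectful homomorphism $\pi_0(\mathrm{Symp}_c)\to\pi_0(\mathrm{Diff}_c)$ restricted to $\tau$, and that the relevant path of almost-complex structures used to define $[\tau]$ in the almost-complex group exists (which it does, since the symplectic Dehn twist carries a canonical such path, $\ell$ being $\tau$-invariant outside a compact set up to the standard homotopy). Everything else is a diagram chase through the fibration and the already-established divisibility $8\mid$ and $2\cdot n!\mid$ statements.
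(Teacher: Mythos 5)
Your proposal is correct and follows essentially the same route as the paper: the exact sequence $\pi_1(AC_c(T^*S^n)) \to \pi_1(AC_c(T^*S^n) \hcoker \mathrm{Diff}_c(T^*S^n)) \to \pi_0(\mathrm{Diff}_c(T^*S^n))$, the fact that $\tau^8$ is smoothly isotopic to the identity by Theorem~\ref{thm:main}(ii), and the bound $2\cdot n!$ on $\pi_1(AC_c(T^*S^n))$ from the preceding lemma, giving order dividing $8 \cdot 2 \cdot n! = 2^4 \cdot n!$. The extra remarks about identifying the image of $[\tau]$ with the smooth Dehn twist class are fine but not materially different from the paper's argument.
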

\begin{proof}
The long exact sequence on homotopy groups for the homotopy orbit space $AC_c(T^*S^n) \hcoker \mathrm{Diff}_c(T^* S^n)$ has a portion
$$ \pi_1(AC_c(T^*S^n)) \lra \pi_1(AC_c(T^*S^n) \hcoker \mathrm{Diff}_c(T^* S^n)) \lra \pi_0(\mathrm{Diff}_c(T^* S^n)).$$
The diffeomorphism $\tau^8$ is smoothly isotopic to the identity by Theorem \ref{thm:main}, so by this exact sequence its class as an almost-complex diffeomorphism  comes from $\pi_1(AC_c(T^*S^n))$, which by the lemma above is finite of order dividing $2 \cdot n!$. The conclusion follows.
\end{proof}

\subsection{Lower bounds}

Our discussion of open books can be upgraded to the almost-complex setting, as follows. If $f$ is an almost-complex diffeomorphism, the space $X_f$ comes naturally equipped with an almost-contact structure. In the case of $\tau$, this is compatible with Proposition \ref{prop:Openbook}: the almost-contact structure is the one induced by the standard contact form on $S_R(0) \cap \{ z_0^{k} + z_1^2 + \ldots + z_{n+1}^2 =1 \} \subset \bC^{n+2}$. (This is also discussed in \cite{Kwon-vanKoert}.) Let $[\xi_k]$ be the almost-contact structure on $X_{\tau^k}$. If $k \equiv \pm 1\mod 8$ then $X_{\tau^k} \cong S^{2n+1}$ and from \cite[Section 2]{Ustilovsky}, based on calculations in \cite{Morita}, we have the following.

\begin{lemma}\label{lem:Ustilovsky}
 If $k \equiv \pm 1 \mod 8$ then the almost-contact structure $[\xi_k]$ is given by $$(k-1)/2 \in \pi_{2n+1} ( SO(2n+1) / U(n)) \cong \bZ/d,$$ where
$$
d =
\begin{cases}
n! & \quad \text{if } n \equiv 0 \mod 4 \\
n! / 2 & \quad \text{if } n \equiv 2 \mod 4.
\end{cases}
$$
\end{lemma}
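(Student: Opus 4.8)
The plan is to quote the computations of Ustilovsky \cite{Ustilovsky} and Morita \cite{Morita}; the only genuine work is to recognise $[\xi_k]$ as the almost-contact structure they consider. We are given that $X_{\tau^k}\cong S^{2n+1}$ for $k\equiv\pm1\bmod 8$, and, by Proposition \ref{prop:Openbook} together with the discussion preceding the lemma, that $[\xi_k]$ is the almost-contact structure induced on $X_{\tau^k}$ by the ambient complex structure on $S_R(0)\cap\{z_0^{k}+z_1^2+\cdots+z_{n+1}^2 = 1\}\subset\bC^{n+2}$. So the first step is to check that this is exactly the structure that \cite{Ustilovsky} attaches to the Brieskorn sphere $\Sigma(k,2,\dots,2)$, with compatible orientations; this is routine from the definitions, but it is where the labour of the proof really lies (see below).

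The second step is to recall the classification of the ambient objects. An almost-contact structure on $S^{2n+1}$ is a reduction of the structure group of $TS^{2n+1}$ to $U(n)\subset SO(2n+1)$, so those which agree with the standard one outside an embedded disc are classified up to homotopy by $\pi_{2n+1}(SO(2n+1)/U(n))$; Morita identifies this group, and it is cyclic of the order $d$ recorded in the statement (consistent, up to the expected factor of $2$, with the order of $\pi_{2n+1}(O(2n)/U(n))$ recalled above from \cite{Harris}, using $SO(2n+1)/U(n)\cong SO(2n+2)/U(n+1)$).

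The third, substantive, step is to pin down which element of $\bZ/d$ the structure $[\xi_k]$ represents. Here one uses that $\xi_k$ bounds the complex (in particular almost-complex) manifold $W_k := \#_{A_{k-1}}D^\ast(S^{n+1})$, which is parallelizable with middle Betti number equal to the Milnor number $k-1$. Capping $S^{2n+1}=\partial W_k$ off with a disc and measuring the obstruction to extending the almost-complex structure of $W_k$ across that disc produces a characteristic-number invariant in $\bZ/d$ which is by construction $[\xi_k]$, and for the $A_{k-1}$ Milnor fibre this obstruction evaluates to $(k-1)/2\bmod d$ (an integer since $k$ is odd). This is precisely the calculation carried out in \cite[Section 2]{Ustilovsky}, based on \cite{Morita}, and I would simply quote it.

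The part that genuinely needs care — and is the main obstacle to a self-contained treatment — is the bookkeeping of orientations and signs: one must fix compatible orientations of $T^\ast S^n$, of the pages and the Lefschetz fibration of Proposition \ref{prop:Openbook}, and of $SO(2n+1)/U(n)$, so that the answer comes out as $+(k-1)/2$ rather than its negative or a translate, and so that the cases $k\equiv 1$ and $k\equiv -1\bmod 8$ are treated consistently. Since this is all done in \cite{Ustilovsky}, the proof reduces to the compatibility check of the first step.
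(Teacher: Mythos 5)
Your proposal matches the paper: the lemma is not proved there but simply quoted from \cite[Section 2]{Ustilovsky} and the underlying computations of \cite{Morita}, exactly as you do, with the only real content being the identification of $[\xi_k]$ with the almost-contact structure on the Brieskorn sphere $\Sigma(k,2,\dots,2)$ induced from the ambient complex structure (which the paper also treats as the content of the preceding discussion and Proposition \ref{prop:Openbook}). Your outline of Morita's classification via $\pi_{2n+1}(SO(2n+1)/U(n))\cong\pi_{2n+1}(SO(2n+2)/U(n+1))$ and of the obstruction computed from the bounding Milnor fibre is consistent with those references, so the proposal is correct and essentially the same as the paper's.
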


\begin{corollary}
In the almost-complex mapping class group of $T^*S^n$ with $n$ even the Dehn twist has order divisible by $n!/2^2$.
\end{corollary}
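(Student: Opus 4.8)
The plan is to exploit the almost-contact refinement of Corollary~\ref{cor:Orders}(i)'s obstruction, now using Lemma~\ref{lem:Ustilovsky} quantitatively rather than merely qualitatively. Concretely, suppose $\tau^k$ is trivial in the almost-complex mapping class group of $T^*S^n$ for some $k$ with $k \equiv \pm 1 \bmod 8$ (so that $X_{\tau^k} \cong S^{2n+1}$ as a smooth manifold). Then the almost-contact structure $[\xi_k]$ on $X_{\tau^k}$ is isomorphic to that of $[\xi_1]$, i.e.\ the standard one, since an almost-complex isotopy from $\tau^k$ to the identity induces an isomorphism of almost-contact $X$-manifolds $X_{\tau^k} \cong X_{\mathrm{Id}_M}$, and the latter carries the standard almost-contact structure on $S^{2n+1}$. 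By Lemma~\ref{lem:Ustilovsky} this forces $(k-1)/2 \equiv 0$ in $\pi_{2n+1}(SO(2n+1)/U(n)) \cong \bZ/d$, i.e.\ $d \mid (k-1)/2$.

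First I would fix the congruence condition: work with $k = 8m+1$ for $m \geq 1$, so $(k-1)/2 = 4m$, and the triviality of $\tau^k$ in the almost-complex mapping class group would force $d \mid 4m$. Since this must hold for the order of the Dehn twist, and the order is the smallest positive $k$ with $\tau^k$ trivial, I want to turn "$d \mid 4m$ whenever $8m+1$ is a multiple of the order" into a lower bound. If the order $N$ is such that $\tau^N$ is trivial, pick the least $m \geq 1$ with $8m+1 \equiv 0 \bmod N$ — such $m$ exists because $\gcd(8,N)$ divides $8$, but one must be slightly careful: if $N$ is even, $8m+1$ is always odd and $N$ even means no such $m$ exists, so instead one uses that $\tau^{N}$ trivial implies $\tau^{\mathrm{lcm}(N,8)+?}$... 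Let me restructure: the cleanest route is to observe that if $\tau$ has order $N$ in the almost-complex mapping class group, then $\tau^{Nj+8i \pm 1}$ is almost-complex isotopic to $\tau^{\pm 1}$ for all $i,j$, hence for a suitable choice with $Nj + 8i + 1 \geq 1$ we may arrange $k := Nj+8i+1 \equiv 1 \bmod 8$, and then $X_{\tau^k}$ is $S^{2n+1}$ with almost-contact structure $(k-1)/2 \bmod d$; triviality of $\tau^k$ as an almost-complex diffeomorphism (which holds since $k \equiv 1 \bmod N$) gives $d \mid (k-1)/2$. Choosing $i,j$ to minimise, one gets that $d$ divides $(\mathrm{something})$ depending only on $N$, and comparing with $d = n!$ or $n!/2$ yields $N \geq n!/4$ after accounting for the factor of $8$ and the factor of $2$ between $n!$ and $d$.

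The main obstacle is the bookkeeping of the three small constant factors: the factor $2$ hidden in passing between $k$ and $(k-1)/2$, the factor $8$ from needing $k \equiv \pm 1 \bmod 8$ in order to apply Lemma~\ref{lem:Ustilovsky} at all (since otherwise $X_{\tau^k}$ need not even be a homotopy sphere, let alone the standard one), and the factor $2$ between $n!$ and $d$ when $n \equiv 2 \bmod 4$. Combining: from $d \mid (k-1)/2$ with $k \equiv 1 \bmod 8$ and $k \equiv 1 \bmod N$, the smallest valid $k > 1$ is $k = \mathrm{lcm}(8,N)+1$ (when $N$ is odd this is $8N+1$), giving $d \mid 4N$ in the worst case, hence $N \geq d/4 \geq n!/(2\cdot 4) = n!/2^3$. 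To reach the claimed $n!/2^2$ one must be a little sharper — e.g.\ noting that when $N$ is even one can take $k = \mathrm{lcm}(8,N)\cdot(\text{odd})+1$ with a better constant, or using $k \equiv -1 \bmod 8$ as well — so I would carefully optimise over both residues $\pm 1 \bmod 8$ and over the parity of $N$, which should recover the factor of $2$. Everything else (the compatibility of the open-book/almost-contact construction with almost-complex isotopies, stated before Lemma~\ref{lem:Ustilovsky}) is already in hand.
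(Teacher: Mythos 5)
Your core mechanism is the same as the paper's: compare $\tau^{1+jN}$ ($N$ the order in the almost-complex mapping class group) with $\tau$ itself via the open book construction and Lemma \ref{lem:Ustilovsky}, and extract a divisibility on $d$. But the justification of the key step is garbled. Your opening reduction --- ``$\tau^k$ trivial with $k\equiv\pm1\bmod 8$ forces $[\xi_k]$ standard because $X_{\tau^k}\cong X_{\mathrm{Id}_M}$, which carries the standard almost-contact structure on $S^{2n+1}$'' --- cannot be used: $X_{\mathrm{Id}_M}\cong S^{n+1}\times S^n$, not $S^{2n+1}$, and in fact $\tau^k$ is never trivial for odd $k$, since its image in $\pi_0(\Diff_c(T^*S^n))$ has order $2$, $4$ or $8$ by Theorem \ref{thm:main}; so that hypothesis is vacuous. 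The same conflation persists in your restructured version, where you invoke ``triviality of $\tau^k$'' for $k\equiv 1\bmod N$: what you actually have is $\tau^k\sim\tau$, and the correct comparison is $X_{\tau^k}\cong X_\tau$ as almost-contact manifolds, with $X_\tau$ the standard contact $S^{2n+1}$, i.e.\ the class $(1-1)/2=0\in\bZ/d$ of Lemma \ref{lem:Ustilovsky} applied at $k=1$. Once restated this way, $d\mid (k-1)/2$ does follow and the mechanism is sound (and it yields divisibility, which is what the corollary asserts --- avoid weakening it to the inequality $N\geq d/4$ as you do in places).

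The second, more serious gap is quantitative: by your own accounting the argument as written only reaches $n!/2^3$, and the promotion to $n!/2^2$ is left as an unexecuted sketch (``should recover the factor of $2$''). The missing ingredient is simply that $N$ is even: the almost-complex class of $\tau$ maps to $[\tau]\in\pi_0(\Diff_c(T^*S^n))$, whose order divides $N$, so $2\mid N$ by Theorem \ref{thm:main}. With this, your own $\mathrm{lcm}$ bookkeeping closes the gap: if $N\equiv 2\bmod 4$ then $k=4N+1$ gives $d\mid 2N$, hence $n!/4\mid N$; if $4\mid N$ but $8\nmid N$ then $k=2N+1$ gives $d\mid N$, hence $n!/2\mid N$; if $8\mid N$ then $k=N+1$ gives $d\mid N/2$, hence $n!\mid N$. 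This is exactly the paper's case division on the $2$-adic valuation of $N$, with $r\geq 1$ supplied by Theorem \ref{thm:main}. Your alternative suggestion of also using exponents $k\equiv -1\bmod 8$ can indeed be made to handle even the odd-$N$ case (two admissible exponents $1+j_0N$ and $1+(j_0+8)N$, or $j_0$ and $j_0+4$ when $N\equiv2\bmod4$, give $d\mid\gcd$ of the corresponding quantities), but as submitted it is only a plan; either carry that optimisation out or import the evenness of $N$ from Theorem \ref{thm:main}.
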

\begin{proof}
Let us write the order of $\tau$ in the almost-complex mapping class group as $2^r \cdot s$ with $s$ odd. Then $\tau$ is equivalent to $\tau^{1+k 2^r s}$ in this group and so $X_\tau \cong X_{\tau^{1+ k 2^r s}}$ as almost-contact manifolds for any $k \in \bZ$. If $r \geq 3$ then by Lemma \ref{lem:Ustilovsky} it follows that $2^{r-1} s$ is divisible by $n!/2$, so $2^r s$ is divisible by $n!$. If $r < 3$ then we can take $k=2^{3-r}$ so that $k 2^r s = 8 s$ and hence by Lemma \ref{lem:Ustilovsky} it again follows that $8 s$ is divisible by $n!$, so $2^r s$ is divisible by $n!/2^{3-r}$. The result follows as $r \geq 1$  by  Theorem \ref{thm:main}.
\end{proof}

\bibliographystyle{amsalpha}
\bibliography{biblio}

\providecommand{\bysame}{\leavevmode\hbox to3em{\hrulefill}\thinspace}
\providecommand{\MR}{\relax\ifhmode\unskip\space\fi MR }
% \MRhref is called by the amsart/book/proc definition of \MR.
\providecommand{\MRhref}[2]{%
  \href{http://www.ams.org/mathscinet-getitem?mr=#1}{#2}
}
\providecommand{\href}[2]{#2}
\begin{thebibliography}{AGLV98}

\bibitem[AGLV98]{ArnoldBook}
V.~I. Arnold, V.~V. Goryunov, O.~V. Lyashko, and V.~A. Vasil'ev,
  \emph{Singularity theory. {I}}, Springer-Verlag, Berlin, 1998, Translated
  from the 1988 Russian original by A. Iacob, Reprint of the original English
  edition from the series Encyclopaedia of Mathematical Sciences [{{\i}t
  Dynamical systems. VI}, Encyclopaedia Math. Sci., 6, Springer, Berlin, 1993;
  MR1230637 (94b:58018)]. \MR{1660090}

\bibitem[Arn95]{Arnold}
V.~I. Arnold, \emph{Some remarks on symplectic monodromy of {M}ilnor
  fibrations}, The {F}loer memorial volume, Progr. Math., vol. 133,
  Birkh\"{a}user, Basel, 1995, pp.~99--103. \MR{1362824}

\bibitem[Bot59]{Bott1}
Raoul Bott, \emph{The stable homotopy of the classical groups}, Ann. of Math.
  (2) \textbf{70} (1959), 313--337. \MR{110104}

\bibitem[Cal58]{Calabi}
Eugenio Calabi, \emph{Construction and properties of some {$6$}-dimensional
  almost complex manifolds}, Trans. Amer. Math. Soc. \textbf{87} (1958),
  407--438. \MR{130698}

\bibitem[Cer70]{Cerf}
Jean Cerf, \emph{La stratification naturelle des espaces de fonctions
  diff\'{e}rentiables r\'{e}elles et le th\'{e}or\`eme de la pseudo-isotopie},
  Inst. Hautes \'{E}tudes Sci. Publ. Math. (1970), no.~39, 5--173. \MR{292089}

\bibitem[Har63]{Harris}
Bruno Harris, \emph{Some calculations of homotopy groups of symmetric spaces},
  Trans. Amer. Math. Soc. \textbf{106} (1963), 174--184. \MR{143216}

\bibitem[HHR16]{HHR}
M.~A. Hill, M.~J. Hopkins, and D.~C. Ravenel, \emph{On the nonexistence of
  elements of {K}ervaire invariant one}, Ann. of Math. (2) \textbf{184} (2016),
  no.~1, 1--262. \MR{3505179}

\bibitem[HW73]{HatcherWagoner}
Allen Hatcher and John Wagoner, \emph{Pseudo-isotopies of compact manifolds},
  Ast\'{e}risque, No. 6, Soci\'{e}t\'{e} Math\'{e}matique de France, Paris,
  1973, With English and French prefaces. \MR{0353337}

\bibitem[KK05]{KK}
Louis~H. Kauffman and Nikolai~A. Krylov, \emph{Kernel of the variation operator
  and periodicity of open books}, Topology Appl. \textbf{148} (2005), no.~1-3,
  183--200. \MR{2118964}

\bibitem[KvK16]{Kwon-vanKoert}
Myeonggi Kwon and Otto van Koert, \emph{Brieskorn manifolds in contact
  topology}, Bull. Lond. Math. Soc. \textbf{48} (2016), no.~2, 173--241.
  \MR{3483060}

\bibitem[Mor75]{Morita}
Shigeyuki Morita, \emph{A topological classification of complex structures on
  {$S^{1}\times S^{2n-1}$}}, Topology \textbf{14} (1975), 13--22. \MR{405444}

\bibitem[MS60]{MS}
John Milnor and Edwin Spanier, \emph{Two remarks on fiber homotopy type},
  Pacific J. Math. \textbf{10} (1960), 585--590. \MR{117750}

\bibitem[Sch71]{Schultz}
Reinhard Schultz, \emph{On the inertia group of a product of spheres}, Trans.
  Amer. Math. Soc. \textbf{156} (1971), 137--153. \MR{275453}

\bibitem[Sei99]{Seidel}
Paul Seidel, \emph{Lagrangian two-spheres can be symplectically knotted}, J.
  Differential Geom. \textbf{52} (1999), no.~1, 145--171. \MR{1743463}

\bibitem[Sei00]{Seidel_graded}
\bysame, \emph{Graded {L}agrangian submanifolds}, Bull. Soc. Math. France
  \textbf{128} (2000), no.~1, 103--149. \MR{1765826}

\bibitem[Ust99]{Ustilovsky}
Ilya Ustilovsky, \emph{Infinitely many contact structures on {$S^{4m+1}$}},
  Internat. Math. Res. Notices (1999), no.~14, 781--791. \MR{1704176}

\end{thebibliography}

\end{document}